\newtheorem{thm}{Theorem}
\newtheorem{lma}{Lemma}
\newtheorem{exa}{Example}
\newtheorem{defn}{Definition}
\newtheorem{remark}{Remark}
\newtheorem{alg}{Algorithm}
\begin{document}

\title[Constructing Bhaskara Pairs]{Construction of Bhaskara Pairs}

\author{Richard J. Mathar}
\urladdr{http://www.mpia.de/~mathar}
\address{Max-Planck Institute of Astronomy, K\"onigstuhl 17, 69117 Heidelberg, Germany}

\subjclass[2010]{Primary 11D25; Secondary 11D72}

\date{\today}
\keywords{Diophantine Equations, Modular Analysis}

\begin{abstract}
We construct integer solutions $\{a,b\}$ to the coupled system of diophantine 
quadratic-cubic equations 
$a^2+b^2=x^3$ and $a^3+b^3=y^2$ for fixed ratios $a/b$.
\end{abstract}

\maketitle 

\section{Pair of Coupled Nonlinear Diophantine Equations} 
\subsection{Scope}

Following a nomenclature of Gupta we define \cite[\S 4.4]{Cooke}:
\begin{defn}(Bhaskara pair) \label{def.pair}
A Bhaskara pair is a pair $\{a,b\}$ of integers that solve the system of 
two nonlinear Diophantine equations of Fermat type:
\begin{equation}
a^2+b^2=x^3 \wedge
a^3+b^3=y^2 \\
\label{eq.defn}
\end{equation}
for some pair $\{x,y\}$.
\end{defn}
\begin{remark}
Lists of $a$ and $b$ are gathered in the Online Encyclopedia
of Integer Sequences \cite[A106319,A106320]{EIS}.
\end{remark}

The symmetry swapping $a$ and $b$ in the equations 
indicates that without loss of information we can
assume $0\le a\le b$, denoting the larger member of the pair
by $b$.

We will not look into solutions where $a$ or $b$ are rational integers (\emph{fractional} Bhaskara pairs).

The two equations can be solved individually \cite{BennettAA163,Dahmenarxiv1002,BruinCM118}.

\begin{alg}\label{alg.nonf}
Given any solution $\{a,b\}$, further solutions $\{as^6,bs^6\}$ are derived
by multiplying both $a$ and $b$
by a sixth power of a common integer $s$, multiplying at the same time
on the right hand sides
$x$ by $s^4$ and $y$ by $s^9$. 
\end{alg}
\begin{defn}
(Fundamental Bhaskara Pair)
A \emph{fundamental} Bhaskara pair is a Bhaskara pair
$\{a,b\}$ where $a$ and $b$ have no common
divisor which is 6-full---meaning there is no prime $p$ such that
$p^6\mid a$ and $p^6\mid b$.
\end{defn}

Although fundamental solutions are pairs that do not have
a common divisor that is a non-trivial sixth power, \emph{individually}
$a$ or $b$ of a fundamental pair may contain sixth or higher (prime) powers.
\begin{exa}\label{ex.pair}
The following is a fundamental Bhaskara pair with 
$2^6\mid a$, $2^6\nmid b$:
$a=2^6\times 5^4\times 31^3\times 61^3$,
$b= 5^4\times 31^3\times 61^3\times 83$,
$x=5^3\times 13\times 31^2\times 61^2$, and
$y=3\times 5^6\times 7\times 31^5\times 61^5$.
\end{exa}

\section{Trivial Solutions}
\subsection{Primitive Solutions}
A first family of solutions is found by setting $a=0$. This reduces the equations to
\begin{equation}
b^2=x^3 \wedge b^3=y^2.
\end{equation}
$x^3$ must be a perfect cube, so in the canonical prime power factorization of $x^3$ all exponents of the primes
must be multiples of three. Also in the canonical prime power factorization of $b^2$ all exponents
must be even. So the first equation demands that the exponents on both sides must be multiples of $[2,3]=6$.
\begin{defn}
Square brackets $[.,.]$ denote the least common multiple. 
Parenthesis $(.,.)$ denote the greatest common divisor.
\end{defn}
In consequence all $b$ must be perfect cubes.
Likewise the second equation demands that the exponents
of $b^3$ and of $y^2$ are multiples of 6. 
In consequence all $b$ must be perfect squares.
Uniting both requirements, all $b$ must be perfect sixth powers. And this requirement
is obviously also sufficient: perfect sixth powers \cite[A001014]{EIS} generate Bhaskara pairs:
\begin{thm}\label{thm.a0}
All integer pairs $\{0,n^6\}$, $n\in {\mathbb Z}_0$, are Bhaskara pairs.
The associated right hand sides are
$x=n^4$, $y=n^9$.
\end{thm}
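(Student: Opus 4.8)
The plan is to prove the statement by direct substitution, since it only asserts the sufficiency half of the equivalence already set up in the preceding paragraph (that paragraph established the necessity: any solution with $a=0$ forces $b$ to be a perfect sixth power). So the entire content here is to check that the candidate pair $\{0,n^6\}$ together with the candidate witnesses $x=n^4$, $y=n^9$ actually satisfies both equations of Definition \ref{def.pair}.

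First I would substitute $a=0$, $b=n^6$ into the quadratic-cubic equation $a^2+b^2=x^3$. The left-hand side becomes $0+(n^6)^2=n^{12}$, and since $n^{12}=(n^4)^3$, this equals $x^3$ with $x=n^4$, confirming the first equation. Next I would substitute the same values into the cubic-square equation $a^3+b^3=y^2$. The left-hand side becomes $0+(n^6)^3=n^{18}$, and since $n^{18}=(n^9)^2$, this equals $y^2$ with $y=n^9$, confirming the second equation. Since $\{0,n^6\}$ is a pair of integers for every $n\in\mathbb{Z}_0$, Definition \ref{def.pair} is met and the pair is a Bhaskara pair.

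There is no real obstacle in this argument — it is pure verification, and the only "work" is bookkeeping the exponents $2\cdot 6=12=3\cdot 4$ and $3\cdot 6=18=2\cdot 9$. The one point worth a sentence is that the ordering convention $0\le a\le b$ is respected (trivially, with $a=0$), and that this family is exactly the one forced by the divisibility analysis above, so the theorem closes the case $a=0$ completely rather than merely exhibiting examples.
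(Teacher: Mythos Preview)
Your proposal is correct and is exactly the verification the paper has in mind: the preceding paragraph establishes necessity and then simply declares the sufficiency ``obvious,'' stating the theorem without further argument. Your explicit check of the exponent identities $2\cdot 6=3\cdot 4$ and $3\cdot 6=2\cdot 9$ is precisely that obvious verification written out.
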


\subsection{Bhaskara Twins}
\begin{defn}(Bhaskara Twins)
Bhaskara twins are a Bhaskara pair where $a=b$.
\end{defn}
According to Definition \ref{def.pair} the Bhaskara twins 
\cite[A106318]{EIS} solve
\begin{equation}
2a^2=x^3 \wedge 2a^3=y^2.
\label{eq.k1}
\end{equation}
Working modulo 2 in the two equations requires that $x^3$ and $y^2$ are even, so $x$ and $y$ must be even,
say $x=2\alpha$, $y=2\beta$. So
\begin{equation}
a^2=4\alpha^3 \wedge a^3=2\beta ^2.
\label{eq.k1red}
\end{equation}
The first equation requires 
by the right hand side
that in the canonical prime power factorization of both sides the exponents
of the odd primes are multiples of 3 and that the exponent of the prime 2 is $\equiv 2 \pmod 3$.
By the left hand side of the first equation it requires that all exponents are even.
So the exponents of the odd primes are multiples of 6, and the exponent of 2 is $\equiv 2 \pmod 6$. 
So from the first equation $a=2^{1+3\times}3^{3\times}5^{3\times}\cdots$, which means $a$ is twice a third power.
\begin{defn}
The notation $3\times$ in the exponents means ``any multiple of 3.''
\end{defn}

The second equation in \eqref{eq.k1red} demands by the right hand side
that the exponents of the odd primes are even and that the exponent of 2 is $\equiv 1 \pmod 2$.
Furthermore by the left hand side all exponents are multiples of 3. This means
all exponents of the odd primes are multiples of 6, and the exponent of the prime 2 is $\equiv 3 \pmod 6$
So from the second equation $a=2^{1+2\times}3^{2\times}5^{2\times}\cdots$, which means $a$ must be twice a perfect square.
Uniting both requirements, $a$ must be twice a sixth power. Obviously that requirement
is also sufficient to generate solutions:
\begin{thm}\label{thm.twin}
The Bhaskara Twins are the integer pairs $\{2n^6,2n^6\}$, $n\in {\mathbb Z}_0$.
The associated free variables are $x=2n^4$, $y=4n^6$.
\end{thm}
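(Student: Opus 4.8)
The plan is to establish the two halves of the claimed characterization separately. Sufficiency is a one-line substitution: with $a=2n^6$ one has $2a^2 = 8n^{12} = (2n^4)^3$ and $2a^3 = 2^4 n^{18} = (4n^9)^2$, so $\{2n^6,2n^6\}$ solves \eqref{eq.k1} with $x=2n^4$ and $y=4n^9$ (the degenerate case $n=0$ is the pair $\{0,0\}$ already contained in Theorem~\ref{thm.a0}). The substance of the proof is therefore the converse: if $\{a,a\}$ is a Bhaskara pair, then $a$ is twice a sixth power.

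For the converse I would work with $p$-adic valuations, which avoids the auxiliary substitution $x=2\alpha$, $y=2\beta$ used in the discussion above. Fix a prime $p$ and put $v=v_p(a)$. From $2a^2=x^3$ we get $v_p(2)+2v\equiv 0\pmod 3$, and from $2a^3=y^2$ we get $v_p(2)+3v\equiv 0\pmod 2$. For odd $p$ the first congruence forces $3\mid v$ and the second forces $2\mid v$, hence $6\mid v$. For $p=2$ the first becomes $1+2v\equiv 0\pmod 3$, i.e.\ $v\equiv 1\pmod 3$, and the second becomes $1+3v\equiv 0\pmod 2$, i.e.\ $v\equiv 1\pmod 2$; by the Chinese Remainder Theorem $v\equiv 1\pmod 6$. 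Thus $v_2(a)\equiv 1\pmod 6$ and $v_p(a)\equiv 0\pmod 6$ for every odd prime, so $a/2$ is a perfect sixth power and $a=2n^6$ with $n=2^{k}\prod_{p\ \mathrm{odd}}p^{k_p}$, where $6k+1=v_2(a)$ and $6k_p=v_p(a)$; all these exponents are nonnegative since $v_2(a)\ge 1$.

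It remains to pin down the sign: if $a<0$ then $a^3<0$, contradicting $2a^3=y^2\ge 0$, and in any case the normalization $0\le a\le b$ together with $a=b$ gives $a\ge 0$, so $n$ may be taken in $\mathbb{Z}_0$. I do not expect a genuine obstacle here; the only point that needs attention is treating the prime $2$ separately from the odd primes and correctly amalgamating its residue constraints modulo $2$ and modulo $3$ through the Chinese Remainder Theorem, everything else being the routine valuation arithmetic already sketched before the statement.
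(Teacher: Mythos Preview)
Your argument is correct and follows essentially the same route as the paper: both proofs analyze the prime-power exponents of $a$ via the two equations, separating the prime $2$ from the odd primes and combining the resulting congruences $v_2(a)\equiv 1\pmod 3$ and $v_2(a)\equiv 1\pmod 2$ by the Chinese Remainder Theorem; your use of $p$-adic valuations directly on $2a^2=x^3$ and $2a^3=y^2$ merely bypasses the paper's preliminary substitution $x=2\alpha$, $y=2\beta$. Incidentally, your verification $2a^3=(4n^9)^2$ is right and exposes a typo in the stated value $y=4n^6$, which should read $y=4n^9$.
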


\section{Rational Ratios of the two Members} \label{sec.k}
\subsection{Prime Factorization} \label{sec.kpr}
The general solution to \eqref{eq.defn} is characterized by some ratio $a/b = u/k \le 1$ with some coprime pair of integers $(k,u)=1$. Cases where $u$ and $k$ are not coprime
are not dealt with because they do not generate new solutions.

If $k$ were not a divisor of $b$, $a=ub/k$ would require
that $k$ is a divisor of $u$ to let $a$ be integer, contradicting the requirement that $u$ and $k$ are coprime.
\begin{alg}
We only admit the denominators $k\mid b$.
\end{alg}

Theorem \ref{thm.a0} and \ref{thm.twin}
cover the solutions of the special cases $u=0$ or $u=1$.
Introducing the notation into \eqref{eq.defn} yields
\begin{equation}
(1+u^2/k^2)b^2=x^3 \wedge
(1+u^3/k^3)b^3=y^2 ;
\end{equation}
\begin{equation}
(u^2+k^2)b^2=k^2x^3 \wedge
(u^3+k^3)b^3=k^3y^2 .
\label{eq.ksol}
\end{equation}

\begin{table}
\begin{tabular}{r|r|r|r}
$k$ & $1+k^2$ & $1+k^3$  & $k$ \\
\hline
1 & 2 & 2 & 1 \\
2 & 5 & $3^2$ &2 \\
3 & $2\times 5$  & $2^2\times 7$ & 3 \\
4 & 17 & $5\times 13$ & $2^2$ \\
5 & $2\times 13$ & $2\times 3^2\times 7$ & $5$ \\
6 & $37$ & $7\times 31$ & $2\times 3$ \\
\end{tabular}
\caption{Prime factorizations of $1+k^2$, $1+k^3$ and $k$}
\label{tab.pfac}
\end{table}

Define prime power exponents $c_i$, $d_i$, $b_i$, $x_i$ and $y_i$ as follows
by prime power factorizations,
where $p_i$ is the $i$-th prime:
\begin{eqnarray} \label{eq.ukdef}
u^2+k^2 
&=& \prod_i p_i^{c_i}, \label{eq.cdef} \\
u^3+k^3 
&=& \prod_i p_i^{d_i}, \label{eq.ddef} \\
b
&=& \prod_i p_i^{b_i},\\
k
&=& \prod_i p_i^{k_i},\\
x
&=& \prod_i p_i^{x_i},\\
y
&=& \prod_i p_i^{y_i}.
\end{eqnarray}
In \eqref{eq.cdef}, $u^2+k^2$ is the sum of two squares \cite[A000404]{EIS}. 
Because $u$ and $k$ are coprime, these $u^2+k^2$
are 2, 5, 10, 13, 17, 25, 26, 29, 34, 37, 41,\ldots,
numbers whose prime divisors are all $p\equiv 1 \pmod 4$
with the exception of a single factor of $2$ 
\cite[A008784]{EIS}\cite[Thm.\ 2.5]{Moreno}\cite[Thm.\ 3]{Grosswald}:
\begin{lma} \label{lma.hypot}
\begin{equation}
c_1 \in \{0,1\}.
\end{equation}
\begin{equation}
p_i \equiv 1 \pmod 4,\,\mathrm{if}\, c_i >0\wedge
p_i\ge 3.
\end{equation}
\end{lma}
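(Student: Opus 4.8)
The plan is to recognize Lemma~\ref{lma.hypot} as the classical description of the prime divisors of a sum of two coprime squares, and to establish its two assertions separately: first the bound on the exponent $c_1$ of the prime $p_1=2$, then the residue condition on the odd prime divisors. In both parts the coprimality hypothesis $(u,k)=1$ does all the work, so I would state it explicitly as the starting point and track where it is used.

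For the claim $c_1\in\{0,1\}$ I would argue by parity. Since $(u,k)=1$, the integers $u$ and $k$ are not both even. If exactly one of them is odd, then $u^2+k^2$ is odd and $c_1=0$. If both are odd, then each of $u^2$ and $k^2$ is $\equiv 1\pmod 8$ (write $m=2j+1$, so $m^2=4j(j+1)+1$ with $j(j+1)$ even), whence $u^2+k^2\equiv 2\pmod 8$; in particular $2\mid u^2+k^2$ but $4\nmid u^2+k^2$, giving $c_1=1$. Either way $c_1\le 1$, as asserted.

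For the statement on odd primes, suppose $p_i\ge 3$ and $c_i>0$, i.e.\ $p_i\mid u^2+k^2$. First note $p_i\nmid k$: otherwise $p_i\mid u^2$, hence $p_i\mid u$, contradicting $(u,k)=1$; symmetrically $p_i\nmid u$. Working modulo $p_i$, the class of $k$ is invertible, and $u^2+k^2\equiv 0$ yields $(uk^{-1})^2\equiv -1\pmod{p_i}$. Thus $-1$ is a quadratic residue modulo $p_i$, and by Euler's criterion $(-1)^{(p_i-1)/2}\equiv 1\pmod{p_i}$, which forces $(p_i-1)/2$ to be even, i.e.\ $p_i\equiv 1\pmod 4$.

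I do not expect a genuine obstacle here: the only points needing a little care are using the reduction modulo $8$ (rather than modulo $4$) in the both-odd case so that $c_1=1$ is pinned down exactly, and invoking the standard fact that $-1$ is a quadratic residue precisely modulo primes $\equiv 1\pmod 4$, which is in any case contained in the cited sources \cite[Thm.\ 2.5]{Moreno}\cite[Thm.\ 3]{Grosswald}. I would also remark that the very same two observations yield the factorization with multiplicity---the odd part of $u^2+k^2$ is a product of primes $\equiv 1\pmod 4$ counted with their exponents---even though the lemma as stated only requires the qualitative version used later in Section~\ref{sec.kpr}.
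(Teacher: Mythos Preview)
Your argument is correct. The paper itself does not supply a proof of Lemma~\ref{lma.hypot}; it merely records the classical fact about sums of two coprime squares and points to the references \cite[Thm.\ 2.5]{Moreno} and \cite[Thm.\ 3]{Grosswald} for justification. Your write-up fills in exactly the standard elementary proof behind those citations: the mod~$8$ parity check pinning down $c_1\in\{0,1\}$, and the quadratic-residue argument (invert $k$ modulo the odd prime $p_i$ to exhibit a square root of $-1$, then apply Euler's criterion) forcing $p_i\equiv 1\pmod 4$. There is no genuine difference in approach---you are simply making explicit what the paper outsources to the literature---and the small care points you flag (working modulo $8$ rather than $4$; checking $p_i\nmid u,k$ from coprimality before inverting) are precisely the places where a terse citation might leave a reader to do their own work.
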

\begin{exa}
If $u=2^6$, $k=83$ as in Example \ref{ex.pair},
$u^2+k^2=5\times 13^3$, so $c_3=1$, $c_6=3$,
and $u^3+k^3=3^2\times 7^2\times 31\times 61$,
so $d_2=2$, $d_4=2$, $d_{11}=1$, $d_{18}=1$.
\end{exa}

The uniqueness of the prime power representations in (\ref{eq.ksol}) requires for all $i\ge 1$
\begin{subequations}\label{eq.crtpre}
\begin{eqnarray}
c_i + 2b_i &=& 2k_i+3x_i ,
\label{eq.modi1}
\\
d_i + 3b_i &=& 3k_i+2y_i,
\label{eq.modi2}
\end{eqnarray}
\end{subequations}
for unknown sets of ${b_i,x_i,y_i}$ and known ${c_i,d_i,k_i}$ (if $u/k$ is fixed and known).
For some $i$---including all $i$ larger than the index of the largest
prime factor of $[u^2+k^2,u^3+k^3,k]$ once $u/k$ is fixed---we 
have $c_i=d_i=k_i=0$.  For these
\begin{subequations}
\begin{eqnarray}
2b_i &=& 3x_i \\
3b_i &=& 2y_i
\end{eqnarray}
\end{subequations}
The first equation requires $2\mid x_i$ and $3\mid b_i$.
The second equation requires $3\mid y_i$ and $2\mid b_i$.
The combination requires $6\mid b_i$. The absence of the $i$-th prime
allows to multiply $b$ by a sixth (or 12th or 18th\ldots)
power of the $i$-th prime. These factors are of no interest to
the construction of fundamental Bhaskara pairs.

In practice we use the Chinese Remainder Theorem (CRT) for all $i$, whether the $c_i$ or $d_i$
are zero or not \cite{OreAMM59,FraenkelPAMS14}. 
Multiply (\ref{eq.modi1}) by 3 and (\ref{eq.modi2}) by 2,
\begin{equation}
3c_i + 6b_i = 6k_i+9x_i \wedge 2d_i + 6b_i = 6k_i+4y_i
\end{equation}
such that the two factors in front of the $b_i$ are the same, and work
modulo 9 in the first equation and modulo 4 in the second:
\begin{subequations} \label{eq.crtpost}
\begin{eqnarray}
6b_i &\equiv & 6k_i-3c_i \pmod 9; \label{eq.crtpost9}\\
6b_i &\equiv & 6k_i-2d_i \pmod 4\label{eq.crtpost4}.
\end{eqnarray}
\end{subequations}
Because 9 and 4 are relatively prime,
the CRT guarantees that an integer $6a_i$ exists.
Furthermore the result will always be a multiple of 6 (hence $a_i$ an integer), because from \eqref{eq.crtpost9}
the equations read modulo 3 we deduce that $6a_i$ is a multiple of 3, and from
\eqref{eq.crtpost4} read modulo 2
that $6a_i$ is a multiple of 2:
\begin{alg}
For each ratio $a/b=u/k$, the prime power decompositions of
$u^2+k^2$ and $u^3+k^3$ generate a unique
exponent $b_i$ of the prime power $p_i^{b_i}$
of 
a conjectured solution $b$.
\end{alg}
We compute $6b_i \pmod {9\times 4}$ by any algorithm \cite{LaiCEE29},
so $b_i$ is determined $\pmod 6$.

The values of $b_i-k_i$ that result from the CRT for the three relevant values of $c_i$ and the two relevant $d_i$
establish Table \ref{tab.crtsol}. The rows and columns
are bi-periodic for both $c_i$ and $d_i$;
the entries depend only on $d_i \pmod 2$ and on $c_i \pmod 3$.
The zero at the top left entry where $d_i$ is a multiple of 2 and $c_i$ a multiple
of 3 means that a prime $p_i$ is ``discarded'' and its associated
sixth power shoved into the $x^3$ and $y^2$ in equation \eqref{eq.ksol}.
That zero in the table purges the non-fundamental solutions.

\begin{table}
\begin{tabular}{l|ll}
$c_i \backslash d_i$ & 0 &  1 \\
\hline
0 & 0 & 3 \\
1 & 4 & 1 \\
2 & 2 & 5 \\
\end{tabular}
\caption{Solutions $b_i-k_i$ to \eqref{eq.crtpost} as a function of $c_i \pmod 3$ and $d_i \pmod 2$.}
\label{tab.crtsol}
\end{table}

\begin{alg}\label{alg.main}
For any fraction $u/k$ of the Farey tree with $(u,k)=1$, construct
the set $\{p_i\}$ of common prime factors of $k$, $u^2+k^2$ and $u^3+k^3$.
Compute the exponents $k_i$, $c_i$ and $d_i$ of their prime power factorizations.
Construct for each $i$ the exponent $b_i$ as the sum of
the entry in Table \ref{tab.crtsol} plus $k_i$,
and compose $b=\prod_i p_i^{b_i}$.
\end{alg}
\begin{remark}\label{rem.exp15}
$u^2+k^2$ and $u^3+k^3$ have no common divisor larger than 2 (see Lemma
\ref{lem.gcd} in the Appendix). So the only case
where $c_i$ and $d_i$ are both nonzero may occur at prime index $i=1$
and if $u$ and $k$ are both odd. For that reason Table \ref{tab.crtsol} never
fathers odd prime powers $p^1$ or $p^5$, and the only odd prime powers
in $b$ of that form are those contributed by the factor $k=\prod_i p_i^{k_i}$.
\end{remark}

\begin{lma}\label{lma.cop23}
Because $k$ has no common prime factors with either $u^2+k^2$
or $u^3+k^3$ according to Lemma \ref{lma.kfac} in the Appendix,
nonzero $k_i$ appear only where $c_i=d_i=0$. 
\end{lma}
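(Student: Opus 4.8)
The plan is to obtain the statement by a direct translation of the standing coprimality $(u,k)=1$ of Section~\ref{sec.k} into the language of the exponents introduced in \eqref{eq.ukdef}. First I would restate the claim: ``nonzero $k_i$ appear only where $c_i=d_i=0$'' is nothing but the pair of coprimality assertions $(k,\,u^2+k^2)=1$ and $(k,\,u^3+k^3)=1$, because $k_i>0$ means $p_i\mid k$, while $c_i>0$ and $d_i>0$ mean $p_i\mid u^2+k^2$ and $p_i\mid u^3+k^3$ respectively. So it suffices to rule out, for every prime $p$, the simultaneous divisibilities $p\mid k$ with $p\mid u^2+k^2$ and $p\mid k$ with $p\mid u^3+k^3$.

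Next I would invoke Lemma~\ref{lma.kfac} of the Appendix, which supplies exactly these two coprimalities. For a self-contained argument I would also record the one-line reason: if a prime $p$ divides $k$, it divides $k^2$ and $k^3$; were it to divide $u^2+k^2$ (respectively $u^3+k^3$) as well, subtracting the term already divisible by $p$ would force $p\mid u^2$ (respectively $p\mid u^3$), hence $p\mid u$ since $p$ is prime, contradicting $(u,k)=1$. Thus no prime dividing $k$ can divide $u^2+k^2$ or $u^3+k^3$, i.e. $c_i=d_i=0$ whenever $k_i>0$, which is the assertion.

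I do not expect a genuine obstacle here: beyond this translation there is essentially nothing to prove, and there are no awkward small cases (if $k$ is even then $u$ is odd and both $u^2+k^2$, $u^3+k^3$ are odd, so already $c_1=d_1=0$; the only case with $c_1,d_1$ both positive is $u,k$ both odd, where $k_1=0$). The sole point requiring care is the bookkeeping one of keeping the hypothesis $(u,k)=1$ in force, ratios with $(u,k)>1$ having been deliberately excluded in Section~\ref{sec.kpr}. With the lemma in hand one gets the structural consequence already flagged in Remark~\ref{rem.exp15}: every index $i$ at which the factor $k$ contributes a prime power $p_i^{k_i}$ is an index where Table~\ref{tab.crtsol} is consulted at its top-left, $c_i\equiv 0\pmod 3$, $d_i\equiv 0\pmod 2$ cell, whose entry $b_i-k_i$ is $0$; hence the exponent of $p_i$ in the constructed $b$ is simply $k_i$ itself, and the contributions of $k$ never interfere with the exponents produced by the CRT from the quadratic and cubic sums.
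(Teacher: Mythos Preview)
Your proposal is correct and matches the paper's approach: the lemma is stated with its justification embedded (the appeal to Lemma~\ref{lma.kfac}), and you have simply unpacked that appeal together with the translation into the exponent notation of \eqref{eq.ukdef}. The additional self-contained argument and the remarks on parity and on Table~\ref{tab.crtsol} are sound elaborations but go beyond what the paper itself records.
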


This ensures
that in the construction of $b$ all $p_i^{k_i}$ appear as factors
and that $k\mid b$. $a=ub/k$ generated by the algorithm is always an integer.

The step from \eqref{eq.crtpre}---necessary and sufficient for a solution---to
\eqref{eq.crtpost} eliminates $x_i$ and $y_i$ by applying a modular sieve;
the modular sieve reduces \eqref{eq.crtpost} to a necessary condition.
To show that these $b$ are also sufficient and indeed solve the coupled 
Diophantine equations, the
step from \eqref{eq.crtpre} to \eqref{eq.crtpost} must be reversible, such that
all solutions of \eqref{eq.crtpost} also fulfill \eqref{eq.crtpre}. 
Indeed we can find a multiple of 9 and add it to the right hand side
of the equivalence \eqref{eq.crtpost9} such that it becomes an equality,
and we can find a multiple of 4 and add it
to the right hand side of the equivalence \eqref{eq.crtpost4} such that it becomes an equality. Dividing the two equations
by 3 and 2, respectively, turns out to be a constructive proof that the $3x_i$ and $2y_i$ exist, and that they are
multiples of $3$ and $2$:
\begin{thm}\label{thm.uni}
For each given ratio $a/b=u/k$, the Algorithm \ref{alg.main} generates a unique
fundamental 
solution $b$.
\end{thm}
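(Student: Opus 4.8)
The plan is to split the claim into \emph{existence} --- the integer $b$ returned by Algorithm~\ref{alg.main}, paired with $a=ub/k$, genuinely is the larger member of a fundamental Bhaskara pair of the prescribed ratio --- and \emph{uniqueness} --- no other fundamental $b$ realizes that ratio --- and to reduce both halves, by unique prime factorization, to the per-prime exponent equations \eqref{eq.crtpre}. The one substantial point is that the transition from \eqref{eq.crtpre} to \eqref{eq.crtpost} is an \emph{equivalence}, not merely a necessary condition: I would show that any $b_i$ satisfying the congruences \eqref{eq.crtpost} can be completed to a bona fide solution of \eqref{eq.crtpre} with \emph{non-negative integer} exponents $x_i$ and $y_i$.

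For existence I would take $b_i$ to be the entry of Table~\ref{tab.crtsol} for the given $c_i\bmod 3$ and $d_i\bmod 2$, increased by $k_i$, and set $x_i:=(c_i+2b_i-2k_i)/3$ and $y_i:=(d_i+3b_i-3k_i)/2$. Integrality is forced by the congruences themselves: \eqref{eq.crtpost9} states $9\mid 3(c_i+2b_i-2k_i)$, hence $3\mid c_i+2b_i-2k_i$, so $x_i\in{\mathbb Z}$ (explicitly $x_i=(6b_i-6k_i+3c_i)/9$); likewise \eqref{eq.crtpost4} gives $2\mid d_i+3b_i-3k_i$, so $y_i\in{\mathbb Z}$. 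Non-negativity is immediate, since every entry of Table~\ref{tab.crtsol} is $\ge 0$ (so $b_i-k_i\ge 0$) while $c_i,d_i\ge 0$. With $x=\prod_i p_i^{x_i}$ and $y=\prod_i p_i^{y_i}$, multiplying the identities \eqref{eq.modi1} and \eqref{eq.modi2} back together over all $i$ reproduces \eqref{eq.ksol}, hence \eqref{eq.defn}, and $a=ub/k$ is an integer because $k\mid b$ (Lemma~\ref{lma.cop23} and the discussion following it). It then remains to verify the pair is fundamental: writing $u=\prod_i p_i^{u_i}$, the exponent of $p_i$ in $a$ is $u_i+b_i-k_i$ and in $b$ is $b_i$; if $k_i>0$ then $c_i=d_i=0$ by Lemma~\ref{lma.cop23}, so the table entry is $0$, $b_i=k_i$, and $u_i=0$ by coprimality, so $p_i$ does not divide $a$; if $k_i=0$ then $b_i\le 5$, so $p_i^6\nmid b$. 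In both cases the smaller of the two exponents is $<6$, so $a$ and $b$ share no common $6$-full divisor.

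For uniqueness, let $b'$ be any fundamental solution with $a'/b'=u/k$. Then $\{a',b'\}$ satisfies \eqref{eq.ksol}, so its exponents $b'_i$ extend to a solution of \eqref{eq.crtpre} by non-negative $x'_i,y'_i$, hence obey \eqref{eq.crtpost}; as $\gcd(9,4)=1$ the CRT pins $6b'_i$ modulo $36$, so $b'_i\equiv b_i\pmod 6$ with $b_i$ the value produced by Algorithm~\ref{alg.main}. I would then fix the representative using fundamentality together with $b_i\le k_i+5$ and $b'_i\ge k_i$ (the latter from $a'=ub'/k\in{\mathbb Z}$ and $(u,k)=1$, which forces $k\mid b'$): if $b'_i\ne b_i$ the congruence gives $|b'_i-b_i|\ge 6$; the case $b'_i\le b_i-6\le k_i-1$ is excluded by $b'_i\ge k_i$, so $b'_i\ge b_i+6\ge k_i+6$, and then $p_i^6$ divides both $b'$ (exponent $\ge 6$) and $a'$ (exponent $u_i+b'_i-k_i\ge b'_i-k_i\ge 6$), contradicting fundamentality. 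Hence $b'_i=b_i$ for all $i$, i.e.\ $b'=b$.

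The delicate step --- the technical heart of the proof --- is the reversibility used in the existence half: it is not enough that the table entries meet the necessary congruences \eqref{eq.crtpost}; back-substitution must deliver honest non-negative integer exponents $x_i$ and $y_i$. This is exactly why the construction works with $6b_i$ modulo $9\times 4$ rather than with $b_i$ modulo $6$: the auxiliary factors $3$ and $2$ that equalize the coefficients of $b_i$ divide out cleanly only because \eqref{eq.crtpost9} is a congruence modulo $9$ (not just modulo $3$) and \eqref{eq.crtpost4} modulo $4$ (not just modulo $2$). Once this is secured, the CRT uniqueness and the ``fundamental'' bookkeeping are routine, and Table~\ref{tab.crtsol} itself is validated by a finite check of the six residue cases.
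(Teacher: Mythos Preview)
Your proposal is correct and follows essentially the same approach as the paper: both hinge on the reversibility of the passage from \eqref{eq.crtpre} to \eqref{eq.crtpost} for existence, and on the CRT pinning $b_i$ modulo $6$ for uniqueness. Your version is in fact more complete than the paper's --- you give explicit formulas for $x_i,y_i$, verify their non-negativity, and carry out the fundamentality check on both the existence and uniqueness sides (e.g.\ the case split on $k_i>0$ versus $k_i=0$), whereas the paper leaves these verifications implicit.
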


Lemma \ref{lma.cop23} means that the data reduction of \eqref{eq.ksol}
effectively deals only with
\begin{equation}
(u^2+k^2)\frac{b^2}{k^2}=x^3\wedge
(u^3+k^3)\frac{b^3}{k^3}=y^2
\label{eq.bbar}
\end{equation}
with three integers $u^2+k^2$, $u^3+k^3$ and
\begin{equation}
\bar b \equiv b/k .
\end{equation}
Can we generate more solutions by not just copying the prime factors
of $k$ over to $b$ but introducing higher exponents, such
that $b_i-k_i>0$? The prime power decomposition of \eqref{eq.bbar} would demand
that the surplus factor $p_i^{2(b_i-k_i)}$ divides $x^3$
and that the surplus factor $p_i^{3(b_i-k_i)}$ divides $y^2$.
Lemma \ref{lma.cop23} ensures that these are the only contributions
to $x_i^3$ and $y_i^2$, so effectively $b_i-k_i$ must be multiples of 6.
These sixth powers are introduced at the same time to $a=ub/k$;
so that deliberation does not generate any other fundamental pairs.
With a similar reasoning, multiplying $b$ by any prime power of a prime
that is not a prime factor of $k$---but coupled to $c_i \pmod 3$ and 
to $d_i \pmod 2$ via \eqref{eq.crtpre}---admits only further exponents
that are multiples of 6, and again there is no venue for any other fundamental
solutions from that subset of prime factors.
The solutions are indeed unique as claimed by Theorem \ref{thm.uni}.

\subsection{Examples with $u=1$}

The algorithm and results will be illustrated for a set of small $1/k$  and integer
ratios $b/a$
in Tables \ref{tab.k1}--\ref{tab.k6}.
The tables have 4 columns, the prime index $i$, the exponents $c_i$, $d_i$ and $k_i$ defined by the prime factorization
of $u^2+k^2$, of $u^3+k^3$, and of $k_i$,
and the factor $p_i^{b_i}$ generated by the CRT.
``Spectator'' primes, the cases (rows) where $c_i=d_i=k_i=0$, are not tabulated; they would be absorbed in the sixth powers
of non-fundamental solutions.

\subsubsection{u/k=1}

The case $u=k=1$ in Table \ref{tab.k1} reconvenes the
Bashkara Twin Pairs of Theorem \ref{thm.twin}.

\begin{table}
\begin{tabular}{l|llll}
$i$ & $c_i$ & $d_i$ & $k_i$ & $p_i^{b_i}$ \\
\hline
1 & 1 & 1 & 0 & $2^1$ \\
\end{tabular}
\caption{The Chinese remainder solutions for $u/k=1$.
Fundamental solution $b=2$, $a=2$.}
\label{tab.k1}
\end{table}

\subsubsection{u/k=1/2}

Looking at the second line of Table \ref{tab.pfac} we have only 
contributions
for primes $p_2=3$ and $p_3=5$ in Table \ref{tab.k2}.
\begin{table}
\begin{tabular}{l|llll}
$i$ & $c_i$ & $d_i$ & $k_i$ & $p_i^{b_i}$ \\
\hline
1 & 0 & 0 & 1 & $2^1$ \\
2 & 0 & 2 & 0 & $3^0$ \\
3 & 1 & 0 & 0 & $5^4$ \\
\end{tabular}
\caption{The Chinese remainder solutions for $u/k=1/2$. 
Fundamental solution $b=2\times 5^4$, $a=5^4$.}
\label{tab.k2}
\end{table}
From there all solutions of the form $\{a=b/2,b\}$ are given by the set of 
$b=2\times 5^4 s^6$ with
non-negative integers $s$,  where $\{x,y\}=\{5^3s^4,3\times 5^6s^9\}$.

\subsubsection{$u/k=1/3$}
From the line $k=3$ of Table \ref{tab.pfac} we have the contribution from the
prime factors of Table \ref{tab.k3}.
\begin{table}
\begin{tabular}{l|llll}
$i$ & $c_i$ & $d_i$ & $k_i$ & $p_i^{b_i}$ \\
\hline
1 & 1 & 2 & 0 & $2^4$ \\
2 & 0 & 0 & 1 & $3^1$ \\
3 & 1 & 0 & 0 & $5^4$ \\
4 & 0 & 1 & 0& $7^3$ \\
\end{tabular}
\caption{The Chinese remainder solutions for $u/k=1/3$. 
Fundamental solution $b=2^4\times 3\times 5^4\times 7^3$, $a=2^4\times 5^4\times 7^3$.}
\label{tab.k3}
\end{table}

\subsubsection{$k\ge 4$}
The primes of the line $k=4$ of Table \ref{tab.pfac} generate
Table \ref{tab.k4}.
\begin{table}
\begin{tabular}{l|llll}
$i$ & $c_i$ & $d_i$ & $k_i$ & $p_i^{b_i}$ \\
\hline
1 & 0 & 0 & 2& $2^2$ \\
3 & 0 & 1 & 0 &$5^3$ \\
6 & 0 & 1 & 0 &$13^3$ \\
7 & 1 & 0 & 0 &$17^4$ \\
\end{tabular}
\caption{The Chinese remainder solutions for $u/k=1/4$. 
Fundamental solution $b=2^2\times 5^3\times 13^3\times 17^4$, $a=5^3\times 13^3\times 17^4$.}
\label{tab.k4}
\end{table}

Further solutions $(a=b/k,b)$ with $u/k=1/5\ldots 1/6$ are gathered in Tables \ref{tab.k5}--\ref{tab.k6}.
\begin{table}
\begin{tabular}{l|llll}
$i$ & $c_i$ & $d_i$ & $k_i$ & $p_i^{b_i}$ \\
\hline
1 & 1 & 1 & 0 & $2^1$ \\
2 & 0 & 2 & 0 & $3^0$ \\
3 & 0 & 0 & 1 & $5^1$ \\
4 & 0 & 1 & 0 & $7^3$ \\
6 & 1 & 0 & 0 & $13^4$ \\
\end{tabular}
\caption{The Chinese remainder solutions for $u/k=1/5$. Fundamental solution 
$b=2\times 5\times 7^3\times 13^4$, $a=2\times 7^3\times 13^4$.}
\label{tab.k5}
\end{table}

\begin{table}
\begin{tabular}{l|llll}
$i$ & $c_i$ & $d_i$ & $k_i$ & $p_i^{b_i}$ \\
\hline
1 & 0 & 0 & 1 & $2^1$ \\
2 & 0 & 0 & 1 & $3^1$ \\
4 & 0 & 1 & 0 & $7^3$ \\
11 & 0 & 1 & 0 & $31^3$ \\
12 & 1 & 0 & 0 & $37^4$ \\
\end{tabular}
\caption{The Chinese remainder solutions for $u/k=1/6$. Fundamental solution 
$b=2\times 3\times 7^3\times 31^3\times 37^4$, $a=7^3\times 31^3\times 37^4$.}
\label{tab.k6}
\end{table}

\subsection{Examples with $u>1$}
Some cases where the numerator of $u/k$ is $u>1$ and therefore $b$ not
an integer multiple of $a$ are illustrated in Tables \ref{tab.k3.4}--\ref{tab.k64.83}.
\begin{table}
\begin{tabular}{l|llll}
$i$ & $c_i$ & $d_i$ & $k_i$ & $p_i^{b_i}$ \\
\hline
1 & 0 & 0 & 2 & $2^2$ \\
3 & 2 & 0 & 0 & $5^2$ \\
4 & 0 & 1 & 0 & $7^3$ \\
6 & 0 & 1 & 0 & $13^3$ \\
\end{tabular}
\caption{The Chinese remainder solutions for $u/k=3/4$.
 Fundamental solution $b=2^2\times 5^2\times 7^3\times 13^3$, $a=3\times 5^2\times 7^3\times 13^3$.}
\label{tab.k3.4}
\end{table}

\begin{table}
\begin{tabular}{l|llll}
$i$ & $c_i$ & $d_i$ & $k_i$ & $p_i^{b_i}$ \\
\hline
1 & 0 & 0 & 1 & $2^1$ \\
2 & 0 & 0 & 1 & $3^1$ \\
5 & 0 & 1 & 0 & $11^3$ \\
11 & 0 & 1 & 0 & $31^3$ \\
18 & 1 & 0 & 0 & $61^4$ \\
\end{tabular}
\caption{The Chinese remainder solutions for $u/k=5/6$.
 Fundamental solution $b=2\times 3\times 11^3\times 31^3\times 61^4$,
$a=5\times 11^3\times 31^3\times 61^4$.}
\label{tab.k5.6}
\end{table}

\begin{table}
\begin{tabular}{l|llll}
$i$ & $c_i$ & $d_i$ & $k_i$ & $p_i^{b_i}$ \\
\hline
3 & 3 & 0 & 0 & $5^0$ \\
5 & 0 & 0 & 1 & $11^1$ \\
6 & 0 & 1 & 0 & $13^3$ \\
27 & 0 & 1 & 0 & $103^3$ \\
\end{tabular}
\caption{The Chinese remainder solutions for $u/k=2/11$,
$u^2+k^2=5^3$, $u^3+k^3=13\times 103$.
Fundamental solution $b=11\times 13^3\times 103^3$,
$a=2\times 13^3\times 103^3$.}
\label{tab.k2.11}
\end{table}

\begin{table}
\begin{tabular}{l|llll}
$i$ & $c_i$ & $d_i$ & $k_i$ & $p_i^{b_i}$ \\
\hline
1 & 1 & 3 & 0 & $2^1$ \\
2 & 0 & 2 & 0 & $3^0$ \\
6 & 2 & 0 & 0 & $13^2$ \\
7 & 0 & 0 & 1 & $17^1$ \\
21 & 0 & 1 & 0 & $73^3$ \\
\end{tabular}
\caption{The Chinese remainder solutions for $u/k=7/17$.
 Fundamental solution $b=2\times 13^2\times 17\times 73^3$,
$a=2\times 7\times 13^2\times 73^3$.}
\label{tab.k7.17}
\end{table}

\begin{table}
\begin{tabular}{l|llll}
$i$ & $c_i$ & $d_i$ & $k_i$ & $p_i^{b_i}$ \\
\hline
2 & 0 & 2 & 0 & $3^0$ \\
3 & 1 & 0 & 0 & $5^4$ \\
4 & 0 & 2 & 0 & $7^0$ \\
6 & 3 & 0 & 0 & $13^0$ \\
11 & 0 & 1 & 0 & $31^3$ \\
18 & 0 & 1 & 0 & $61^3$ \\
23 & 0 & 0 & 1 & $83^1$ \\
\end{tabular}
\caption{The Chinese remainder solutions for $u/k=2^6/83$,
Example \ref{ex.pair}.}
\label{tab.k64.83}
\end{table}

\section{Table of Fundamental Solutions}
Systematic exploration of ratios $u/k$ 
sorted along increasing $k$ generates Table \ref{tab.kfun}.

The rather larger value of $b$ for $u/k=5/6$ is derived with
Table \ref{tab.k5.6} from the fact that $u^2+k^2$ have a rather
large isolated prime factor ($p_{18}=61$) which enters with its fourth power.

The rather small value of $b$ at $u/k=2/11$ is explained
with Table \ref{tab.k2.11} from the fact that $u^2+k^2$ is a cube,
which does not contribute to $b$ at all because the
exponent is zero for $c_i\equiv 0 \pmod 3$, $d_i\equiv 0 \pmod 2$
in Table \ref{tab.crtsol}.

\begin{table}
\begin{tabular}{rrr}
$a$ & $b$ & $u/k$ \\
\hline
2 & 2 & 1\\ % TAG
625 & 1250 & 1/2\\ % TAG
3430000 & 10290000 & 1/3\\ % TAG
2449105750 & 3673658625 & 2/3\\ % TAG
22936954625 & 91747818500 & 1/4\\ % TAG
56517825 & 75357100 & 3/4\\ % TAG
19592846 & 97964230 & 1/5\\ % TAG
3327950899994 & 8319877249985 & 2/5\\ % TAG
3437223234 & 5728705390 & 3/5\\ % TAG
104677490484 & 130846863105 & 4/5\\ % TAG
19150763710393 & 114904582262358 & 1/6\\ % TAG
2745064044632305 & 3294076853558766 & 5/6\\ % TAG
3975350 & 27827450 & 1/7\\ % TAG
936110884878 & 3276388097073 & 2/7\\ % TAG
26869428369750 & 62695332862750 & 3/7\\ % TAG
4813895358057500 & 8424316876600625 & 4/7\\ % TAG
329402537360 & 461163552304 & 5/7\\ % TAG
54709453541096250 & 63827695797945625 & 6/7\\ % TAG
3305810795625 & 26446486365000 & 1/8\\ % TAG
113394176313 & 302384470168 & 3/8\\ % TAG
689223517385 & 1102757627816 & 5/8\\ % TAG
978549117961625 & 1118341849099000 & 7/8\\ % TAG
274817266734250 & 2473355400608250 & 1/9\\ % TAG
41793444127641250 & 188070498574385625 & 2/9\\ % TAG
176590156053048868 & 397327851119359953 & 4/9\\ % TAG
6143093188763230 & 11057567739773814 & 5/9\\ % TAG
601306443010000 & 773108283870000 & 7/9\\ % TAG
6758920534667005000 & 7603785601500380625 & 8/9\\ % TAG
104372894488263401 & 1043728944882634010 & 1/10\\ % TAG
458710390065569889 & 1529034633551899630 & 3/10\\ % TAG
8357399286061919849 & 11939141837231314070 & 7/10\\ % TAG
49927726291701142521 & 55475251435223491690 & 9/10\\ % TAG
11221334146768 & 123434675614448 & 1/11\\ % TAG
4801442438 & 26407933409 & 2/11\\ % TAG
33528490382546250 & 122937798069336250 & 3/11\\ % TAG
5247317639775500 & 14430123509382625 & 4/11\\ % TAG
1712007269488880 & 3766415992875536 & 5/11\\ % TAG
13496488877215427538 & 24743562941561617153 & 6/11\\ % TAG
587831133723750 & 923734638708750 & 7/11\\ % TAG
58661465201996135000 & 80659514652744685625 & 8/11\\ % TAG
2046772976463486000 & 2501611415677594000 & 9/11\\ % TAG
414446414697850990 & 455891056167636089 & 10/11\\ % TAG
\end{tabular}
\caption{The fundamental solutions for ratios $a/b=u/k$ up to denominator $k=11$.}
\label{tab.kfun}
\end{table}
\clearpage

Multiplications of solutions of Table \ref{tab.kfun}
with common powers $s^6$ and sorting along increasing $b$ leads 
to Table \ref{tab.res}.
Trivial solutions with $a=0$ ($u/k=0$) are not listed.
The fundamental solutions are flagged by $s=1$ and indicate
where Table \ref{tab.kfun} intersects with Table \ref{tab.res}.
\begin{remark}
The list in Table \ref{tab.res} is not proven to be complete
up to its maximum $b$, because only a limited number of ratios $a/b=u/k$ were computed.
\end{remark}

\begin{longtable}{rrrr}
\caption{Bhaskara pairs with $a>0$, $b\le 3\times 10^{10}$ after
scanning the $u/k$ ratios up to denominators $k\le 200'000$. \cite[A106320]{EIS}}\\
$a$ & $b$ & $u/k$ & s\\
\hline 
\endfirsthead
$a$ & $b$ & $u/k$ & s\\
\hline 
\endhead
2 & 2 & 1 & 1\\
128 & 128 & 1 & 2\\
625 & 1250 & 1/2 & 1\\
1458 & 1458 & 1 & 3\\
8192 & 8192 & 1 & 4\\
31250 & 31250 & 1 & 5\\
40000 & 80000 & 1/2 & 2\\
93312 & 93312 & 1 & 6\\
235298 & 235298 & 1 & 7\\
524288 & 524288 & 1 & 8\\
455625 & 911250 & 1/2 & 3\\
1062882 & 1062882 & 1 & 9\\
2000000 & 2000000 & 1 & 10\\
3543122 & 3543122 & 1 & 11\\
2560000 & 5120000 & 1/2 & 4\\
5971968 & 5971968 & 1 & 12\\
9653618 & 9653618 & 1 & 13\\
3430000 & 10290000 & 1/3 & 1\\
15059072 & 15059072 & 1 & 14\\
9765625 & 19531250 & 1/2 & 5\\
22781250 & 22781250 & 1 & 15\\
3975350 & 27827450 & 1/7 & 1\\
33554432 & 33554432 & 1 & 16\\
48275138 & 48275138 & 1 & 17\\
28130104 & 52743945 & 8/15 & 1\\
29160000 & 58320000 & 1/2 & 6\\
68024448 & 68024448 & 1 & 18\\
56517825 & 75357100 & 3/4 & 1\\
94091762 & 94091762 & 1 & 19\\
19592846 & 97964230 & 1/5 & 1\\
128000000 & 128000000 & 1 & 20\\
73530625 & 147061250 & 1/2 & 7\\
171532242 & 171532242 & 1 & 21\\
226759808 & 226759808 & 1 & 22\\
296071778 & 296071778 & 1 & 23\\
163840000 & 327680000 & 1/2 & 8\\
382205952 & 382205952 & 1 & 24\\
488281250 & 488281250 & 1 & 25\\
617831552 & 617831552 & 1 & 26\\
219520000 & 658560000 & 1/3 & 2\\
332150625 & 664301250 & 1/2 & 9\\
774840978 & 774840978 & 1 & 27\\
963780608 & 963780608 & 1 & 28\\
1189646642 & 1189646642 & 1 & 29\\
625000000 & 1250000000 & 1/2 & 10\\
1458000000 & 1458000000 & 1 & 30\\
1775007362 & 1775007362 & 1 & 31\\
254422400 & 1780956800 & 1/7 & 2\\
2147483648 & 2147483648 & 1 & 32\\
1107225625 & 2214451250 & 1/2 & 11\\
920414222 & 2235291682 & 7/17 & 1\\
2582935938 & 2582935938 & 1 & 33\\
3089608832 & 3089608832 & 1 & 34\\
1800326656 & 3375612480 & 8/15 & 2\\
2449105750 & 3673658625 & 2/3 & 1\\
3676531250 & 3676531250 & 1 & 35\\
1866240000 & 3732480000 & 1/2 & 12\\
4353564672 & 4353564672 & 1 & 36\\
3617140800 & 4822854400 & 3/4 & 2\\
5131452818 & 5131452818 & 1 & 37\\
3437223234 & 5728705390 & 3/5 & 1\\
6021872768 & 6021872768 & 1 & 38\\
3016755625 & 6033511250 & 1/2 & 13\\
1253942144 & 6269710720 & 1/5 & 2\\
7037487522 & 7037487522 & 1 & 39\\
2500470000 & 7501410000 & 1/3 & 3\\
8192000000 & 8192000000 & 1 & 40\\
4705960000 & 9411920000 & 1/2 & 14\\
9500208482 & 9500208482 & 1 & 41\\
10978063488 & 10978063488 & 1 & 42\\
9725113750 & 11493316250 & 11/13 & 1\\
12642726098 & 12642726098 & 1 & 43\\
7119140625 & 14238281250 & 1/2 & 15\\
14512627712 & 14512627712 & 1 & 44\\
16607531250 & 16607531250 & 1 & 45\\
18948593792 & 18948593792 & 1 & 46\\
2898030150 & 20286211050 & 1/7 & 3\\
10485760000 & 20971520000 & 1/2 & 16\\
21558430658 & 21558430658 & 1 & 47\\
24461180928 & 24461180928 & 1 & 48\\
4801442438 & 26407933409 & 2/11 & 1\\
27682574402 & 27682574402 & 1 & 49\\
\label{tab.res}
\end{longtable}

\section{Criteria On The Larger Member}
\subsection{Brute Force}
Building a complete table of the $b$ that are solutions up to some maximum
calls for an efficient method to decide whether any candidate $b$ has
an associate $a$ that solves the equations.

The brute force method is rather slow: one could check all individual
$0\le a\le b$ whether the sum $a^2+b^2$ is a cube and whether $a^3+b^3$ 
is a square; this effort grows $\sim b$. A faster brute force method
considers all cubes $x^3$ in the range $b^{2/3}$ up to $(2b)^{2/3}$, derives
the associates $a=\sqrt{x^3-b^2}$ and checks these first whether they
are integer and then whether they solve the equations; 
this effort grows $\sim b^{2/3}$.

\subsection{Removal of Non-fundamental Pairs}
Reverse engineering the results of the previous sections starts from
the the prime power decomposition of $b$. The set of its factors $p_i^{b_i}$
has $\omega(b)$ members, where $\omega(.)$ denotes the number if distinct primes
that divide the argument \cite[A001221]{EIS}. 
For any subset of the $p_i$ where $b_i\ge 6$, we can split off a set of
sixth prime powers that define a factor $s^6$ considered a part of
a non-fundamental solution, and continue to figure out whether $b/s^6$ is
a member of a fundamental pair. For the rest of the section we only
deal with this checking of $b$ as a member of a fundamental pair. Note
that still the prime factor decomposition of $b$ may have prime exponents
that are $\ge 6$.

\subsection{Congruences for Fundamental Pairs}
This set of prime powers of $b$ is divided in an outer decision loop in 
$2^{\omega(b)}$ different ways into two disjoint subsets; one subset defines
the prime powers of $k=\prod_i p_i^{k_i}$, the other the prime powers of
the conjugate $\bar b=p/k$, $\omega(\bar b)=\omega(b)-\omega(k)$.

If the subset of the prime powers of $k$ is chosen to be empty,
$k=u=1$, this reduces to a trivial check whether $b$ is a member of a Bhaskara
Twin Pair of the format of Theorem \ref{thm.twin}.

For each of these candidates $k$ of $b$ we wish to decide
whether an associate coprime $u$ exists that solves \eqref{eq.bbar}.
\begin{itemize}
\item
If the prime power set of $\bar b$ contains
exponents $\equiv \pm 1 \pmod 6$,
we reject the $k$,
because (see Remark \ref{rem.exp15}) it is impossible
to find coprime $u^2+k^2$ and $u^3+k^3$ that complement them to
cubes and squares. (To reject means to book them as not fostering solutions.)
\item
If the prime power set of $\bar b$ contains exponents $\ge 6$
we reject the $k$ because the same prime power appears in $a=u\bar b$
which violates the search criterion for fundamental pairs.
\end{itemize}

\subsubsection{}
The prime power set of $\bar b$ now contains primes with exponent 2, 3 or 4\@.
According to Table \ref{tab.crtsol} the exponent 2 enforces that the prime
factor $p_i^{1+3\times}$ appears in $u^2+k^2 = \prod_i p_i^{c_i}$
to complement $x^3$,
the exponent 4 enforces
that the prime factor $p_i^{2+3\times}$ appears in $u^2+k^2$ to
complement $x^3$, and 
the exponent 3 enforces that the prime factor $p_i^{1+2\times}$ appears
in $u^3+k^3 = \prod_i p_i^{d_i}$ to complement $y^2$.

\begin{itemize}
\item
We reject exponent sets $\{c_i\}$ if they violate 
Lemma \ref{lma.hypot}.
\end{itemize}
This knowledge that some specific primes or prime powers 
appear in the prime power factorization of $u^2+k^2$ or
$u^3+k^3$ is used to narrow down the search set of $u$
because for these known $p_i$ and given $k$ the quadratic and cubic
residues must be
\begin{equation}
u^2 \equiv -k^2 \pmod {p_i}, \, \mathrm{or}\,\mathrm{even} \pmod {p_i^2},
\end{equation}
respectively
\begin{equation}
u^3 \equiv -k^3 \pmod {p_i}.
\end{equation}

\subsubsection{}
The worst case of the analysis occurs if the entire set of prime powers of $b$
is packed into $k$, $k=b$. Then $\bar b=1$ and none of the rejection
criteria above applies. We are facing the original
set of equations
just with the 
additional support information that $k$ is known and that
$u$ and $k$ need to be coprime: 
\begin{equation}
u^2+k^2=x^3 \wedge u^3+k^3=y^2, \quad (u,k)=1
\label{eq.ukcopr}
\end{equation}
\begin{remark}
The solutions $k$ for the first equation are \cite[A282095]{EIS};
the solutions $k$ for the second equation  are 
\cite[A282639]{EIS}.
The task is to find the values that are in both sequences.
\end{remark}
It is unknown whether any solutions to \eqref{eq.ukcopr}---coprime
Bhaskara pairs---exist.

According to Remark \ref{rem.stpar} the parities
of $k$ and $u$ differ, so $u^2+k^2$ is odd.
In any case the prime factors of $x$ are restricted
by Lemma \ref{lma.hypot} and appear with exponents that are multiples of 3;
the prime factor 2 does not appear.
The prime factors of $k$ are known, and
the prime factor set of $u$ is restricted by not intersecting the prime
factor set of $k$. A weak upper limit of the largest prime factor in $u$
is $k$; a weak upper limit of the largest prime factor in $x$ is 
$(2k^2)^{1/3}$.
$u$ and $x$ have no common prime factor (because that
would need to appear also in $k$ and violate co-primality).
Similarly $k$ and $x$ have no common prime factor.

The simplest way to implement a sieve is to work in a loop
over hypothetical prime factors
$p_i|x$ and discard them if $-k^2$ are not quadratic residues 
as required by \eqref{eq.ukcopr}:
\begin{equation}
u^2 \equiv -k^2 \pmod {p_i^3}.
\end{equation}

A support for brute force construction of all solutions to the
first equation in \eqref{eq.ukcopr}---faster than a loop over all coprime $u$---is given by:
\begin{lma}\label{lma.twosquares}
\cite{ChenMC77,Dahmenarxiv1002}
A solution to
\begin{equation}
u^2+k^2 = x^3,\quad (u,k,x)=1,\quad u,k,x\in \mathbb{Z}
\label{eq.ukx3}
\end{equation}
satisfies
\begin{equation}
\{u,k,x\} = \{s(s^2-3t^2), t(3s^2-t^2),t^2+s^2\}
\label{eq.parts}
\end{equation}
for some $s,t\in \mathbb{Z}$ with $(s,t)=1$ and $st\neq 0$.
\end{lma}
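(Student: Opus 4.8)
\section{Proof Proposal}

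The plan is to pass to the ring of Gaussian integers $\mathbb{Z}[i]$, which is Euclidean and hence a unique factorization domain, and to factor \eqref{eq.ukx3} as $(u+ki)(u-ki)=x^3$. First I would sharpen the hypothesis: if a rational prime $p$ divided both $u$ and $k$, then $p^2\mid x^3$, so $p\mid x$, contradicting $(u,k,x)=1$; hence $(u,k)=1$ outright. Next I would settle the parity. Coprime $u,k$ cannot both be even; and if both were odd then $u^2+k^2\equiv 2\pmod 4$, which is impossible for a cube since the cubic residues mod $4$ are $0,1,3$. Therefore $u$ and $k$ have opposite parity and $x$ is odd, so in particular $2\nmid x$ and $1+i\nmid x$.

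The core step is to show that $u+ki$ and $u-ki$ are coprime in $\mathbb{Z}[i]$. Any common divisor $\delta$ divides their sum $2u$ and their difference $2ki$; since $(u,k)=1$ in $\mathbb{Z}$ (hence in $\mathbb{Z}[i]$, because $mu+nk=1$ for suitable rational integers $m,n$) and $i$ is a unit, $\delta$ divides $2=-i(1+i)^2$. But the only Gaussian prime dividing $2$ is $1+i$, and $1+i\mid u+ki$ would force its norm $2$ to divide $u^2+k^2=x^3$, contradicting that $x$ is odd. So $\delta$ is a unit, and $u+ki$, $u-ki$ are coprime. Because their product is the cube $x^3$ and $\mathbb{Z}[i]$ is a UFD, each factor is a unit times a cube: $u+ki=\varepsilon(s+ti)^3$ with $\varepsilon\in\{1,-1,i,-i\}$ and $s,t\in\mathbb{Z}$. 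Since every unit of $\mathbb{Z}[i]$ is itself a cube ($1=1^3$, $-1=(-1)^3$, $-i=i^3$, $i=(-i)^3$), the unit $\varepsilon$ can be absorbed, and after renaming we may write $u+ki=(s+ti)^3$.

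Expanding the cube gives
\[
(s+ti)^3 = s(s^2-3t^2) + t(3s^2-t^2)\,i ,
\]
so $u=s(s^2-3t^2)$ and $k=t(3s^2-t^2)$, while taking norms yields $x^3=(s^2+t^2)^3$, i.e.\ $x=s^2+t^2$ on choosing $x>0$; this is exactly \eqref{eq.parts}. A common prime factor of $s$ and $t$ would divide both $u$ and $k$, contradicting $(u,k)=1$, so $(s,t)=1$; and $st=0$ forces $u=0$ or $k=0$, the degenerate representations $0+x^2$ that do not occur for a genuine two-square solution (and which in the Bhaskara context are the already-classified family of Theorem \ref{thm.a0}), so for the solutions of interest $st\neq 0$. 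Finally the set braces on the left of \eqref{eq.parts} absorb the four unit-rotations $s+ti\mapsto i^m(s+ti)$, which only permute and change the signs of $u$ and $k$, so the parametrization is as claimed. I expect the coprimality argument of the second paragraph, together with the parity bookkeeping at the prime $1+i$, to be the only delicate point; the remainder is the standard unique-factorization descent.
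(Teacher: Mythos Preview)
Your argument is correct and is the classical descent via unique factorization in $\mathbb{Z}[i]$: the parity analysis at the prime $1+i$, the coprimality of $u+ki$ and $u-ki$, the absorption of the unit into the cube (using that every unit of $\mathbb{Z}[i]$ is a cube), and the expansion of $(s+ti)^3$ are all handled cleanly.

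There is, however, nothing in the paper to compare it against: the lemma is stated with the citations \cite{ChenMC77,Dahmenarxiv1002} and immediately followed by an algorithm that uses it as a black box, with no proof supplied. What you have written is essentially the standard proof one finds behind those references, so you have filled in precisely the justification the paper outsources.

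One minor remark on the boundary case: the clause $st\neq 0$ in the paper's statement is slightly too strong as written, since $(u,k,x)=(0,\pm 1,1)$ satisfies \eqref{eq.ukx3} and forces $s=0$ or $t=0$. Your parenthetical disposing of this via Theorem~\ref{thm.a0} is the right move, and in every place the paper actually invokes the lemma (the divisor-loop algorithm and the appendix lemma on $u^2+p^2=x^3$) one has $u,k\neq 0$, so the $st\neq 0$ conclusion is valid in context.
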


\begin{alg}
Loop over all divisors $t$ (of both signs)
of $k$, compute the conjugate divisor $k/t=3s^2-t^2$.
Check that $s$ is integer, else discard $t$.
If $s$ is not coprime to $t$, discard $t$.
Compute $u=s(s^2-3t^2)$ and take the absolute value.
If that absolute value is larger than $k$ or not coprime to $k$, discard $t$,
otherwise a solution of \eqref{eq.ukx3} is found.
\end{alg}

\begin{remark}\label{rem.stpar}
The parities of $s$ and $t$ in \eqref{eq.parts} are different. In detail: If $k$ is 
\begin{itemize}
\item
odd, all divisors $t$ are odd, and the conjugate $3s^2-t^2$ are also odd.
So $3s^2$ are even. 
Therefore $s^2$ must be even and eventually $s$ be even. 
The conjugate $s^2-3t^2$ are odd and $u$ are even.
\item
even, and $t$ is even: Because we
request $s$ to be coprime to $t$,
$s$ must be odd, so $3s^2$ is odd, and the conjugate $3s^2-t^2$ is odd.
The conjugate $s^2-3t^2$ is odd, and $u$ is odd.
\item
even and $t$ is odd, the conjugate $3s^2-t^2$ must be even, so $3s^2$ must be odd
and hence $s$ must be odd. Its conjugate $s^2-3t^2$ is even, so $u$ is even.
This violates $(u,k)=1$ and does not occur.
\end{itemize}
\end{remark}

\section{Summary}
We have shown that for each ratio $a/b$ 
a unique smallest (fundamental) solution of the non-linear coupled diophantine equations \eqref{eq.defn}
exists, which can be constructed 
by modular analysis via the Chinese Remainder Theorem.
We constructed these explicitly for a set of small ratios.

\bibliographystyle{amsplain}
\bibliography{all}

\providecommand{\bysame}{\leavevmode\hbox to3em{\hrulefill}\thinspace}
\providecommand{\MR}{\relax\ifhmode\unskip\space\fi MR }
% \MRhref is called by the amsart/book/proc definition of \MR.
\providecommand{\MRhref}[2]{%
  \href{http://www.ams.org/mathscinet-getitem?mr=#1}{#2}
}
\providecommand{\href}[2]{#2}
\begin{thebibliography}{10}

\bibitem{BennettAA163}
Michael~A. Bennett, Imin Chen, Sander~R. Dahmen, and Soroosh Yazdani, \emph{On
  the equation $a^3+b^{3n}=c^2$}, Acta. Arithm. \textbf{163} (2014), no.~4,
  327--343. \MR{3217670}

\bibitem{BruinCM118}
Nils Bruin, \emph{The diophantine equations $x^2\pm y^4=\pm z^6$ and $x^2+
  y^8=z^3$}, Compositio Math. \textbf{118} (1999), no.~3, 305--321.
  \MR{1711307}

\bibitem{ChenMC77}
Imin Chen, \emph{On the equation $s^2+y^{2p}=\alpha^3$}, Math. Comp.
  \textbf{77} (2008), no.~262, 1223--1227. \MR{2373199}

\bibitem{Cooke}
Roger~L. Cooke, \emph{The history of mathematics: a brief course}, Wiley, New
  York, 2005. \MR{2131877}

\bibitem{Dahmenarxiv1002}
Sander~R. Dahmen, \emph{A refined modular approach to the diophantine equation
  $x^2+y^{2n}=z^3$}, arXiv:1002.0020 (2010).

\bibitem{FouvryAM172}
\'Etienne Fouvry and J\"urgen Kl\"uners, \emph{On the negative pell equation},
  Ann. Math. \textbf{172} (2010), no.~3, 2035--2104. \MR{2726105}

\bibitem{FraenkelPAMS14}
Aviezri~S. Fraenkel, \emph{New proof of the general chinese remainder theorem},
  Proc. Amer. Math. Soc. \textbf{14} (1963), no.~5, 790--791. \MR{0154841 (27
  \#4785)}

\bibitem{GoodmanAMM59}
A.~W. Goodman and W.~M. Zaring, \emph{Euclid's algorithm and the
  least-remainder algorithm}, Am. Math. Monthly \textbf{59} (1952), no.~3,
  156--159. \MR{0047675}

\bibitem{Grosswald}
Emil Grosswald, \emph{Representations of integers as sums of squares},
  Springer, 1985. \MR{0803155}

\bibitem{LagariasTAMS260}
J.~C. Lagarias, \emph{On the computational complexity of determining the
  solvability or unsolvability of the equation $x^2-dy^2=-1$}, Trans. Am. Math.
  Soc. \textbf{260} (1980), no.~2, 485--508. \MR{574794}

\bibitem{LaiCEE29}
Yeu-Pong Lai and Chin-Chen Chang, \emph{Parallel computational algorithms for
  generalized chinese remainder theorem}, Comput. Electr. Engin. \textbf{29}
  (2003), no.~8, 801--811.

\bibitem{Moreno}
Carlos~J. Moreno and Samuel~S. {Wagstaff Jr.}, \emph{Sums of squares of
  integers}, Chapman \& Hall, 2005.

\bibitem{OreAMM59}
Oystein Ore, \emph{The general chinese remainder theorem}, Am. Math. Monthly
  \textbf{59} (1952), no.~6, 365--370. \MR{0048481}

\bibitem{EIS}
Neil J.~A. Sloane, \emph{The {O}n-{L}ine {E}ncyclopedia {O}f {I}nteger
  {S}equences}, Notices Am.\ Math.\ Soc. \textbf{50} (2003), no.~8, 912--915,
  http://oeis.org/. \MR{1992789 (2004f:11151)}

\end{thebibliography}

\appendix
\section{Greatest Common Divisors}
\begin{lma}
The greatest common divisor of $1+k$ and $1+k^2$ is
\begin{equation}
(1+k,1+k^2)=
\left\{\begin{array}{ll} 2,& 2\nmid k;\\ 1,& 2\mid k. \end{array} \right.
\end{equation}
\end{lma}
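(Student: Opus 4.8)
The plan is to run one step of the Euclidean algorithm on the pair $\{1+k,\,1+k^2\}$ and thereby reduce the problem to a gcd with $2$, after which the two cases fall out by parity.

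First I would observe the algebraic identity $1+k^2 = (k-1)(k+1) + 2$, which holds since $(k-1)(k+1)=k^2-1$. This exhibits $2$ as the remainder of $1+k^2$ upon division by $1+k$ (at least in spirit; one does not even need $1+k>2$, since divisibility arguments suffice). Consequently any common divisor of $1+k$ and $1+k^2$ divides $2$, and conversely any common divisor of $1+k$ and $2$ divides $1+k^2$ by the same identity. Hence
\begin{equation}
(1+k,\,1+k^2) = (1+k,\,2).
\end{equation}

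Second I would split on the parity of $k$. If $2\nmid k$, then $1+k$ is even, so $(1+k,2)=2$. If $2\mid k$, then $1+k$ is odd, so $(1+k,2)=1$. Combining with the displayed reduction gives exactly the claimed case distinction.

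There is no real obstacle here: the only thing to be careful about is stating the reduction $(1+k,1+k^2)=(1+k,2)$ as a genuine gcd identity rather than merely an appeal to the division algorithm, so that it remains valid for small $k$ (including $k=0,1$) where $1+k\le 2$; phrasing it through mutual divisibility via the identity $1+k^2-(k-1)(k+1)=2$ handles that uniformly.
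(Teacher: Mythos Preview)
Your proof is correct and follows essentially the same route as the paper: one Euclidean step via $1+k^2=(k-1)(1+k)+2$ to reduce to $(1+k,2)$, followed by the parity split. Your extra care about small $k$ is a nice touch but not needed beyond what the paper does.
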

\begin{proof}
The Euclidean Algorithm to construct the greatest common divisor starts
with \cite{GoodmanAMM59}
\begin{equation}
\frac{k^2+1}{k+1} = k-1+\frac{2}{k+1}
\end{equation}
and basically terminates at this step, so $(1+k,1+k^2) = (1+k,2)$.
This is obviously $1$ or $2$ for even and odd $k$ as claimed.
\end{proof}

\begin{lma}\label{lem.gcd}
The greatest common divisor of $u^2+k^2$ and $u^3+k^3$  for coprime $(u,k)=1$ is
\begin{equation}
(u^2+k^2,u^3+k^3)=
\left\{\begin{array}{ll} 2,& 2\mid (k-u);\\ 1,& 2\nmid (k-u). \end{array} \right.
\end{equation}
\end{lma}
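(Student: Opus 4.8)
The plan is to compute $g \equiv (u^2+k^2, u^3+k^3)$ by reducing the cubic combination modulo the quadratic one, exactly as in the preceding lemma. First I would observe the algebraic identity
\begin{equation}
u^3+k^3 = (u+k)(u^2-uk+k^2) = (u+k)\bigl[(u^2+k^2) - uk\bigr],
\end{equation}
so that $u^3+k^3 \equiv -uk(u+k) \pmod{u^2+k^2}$. Hence $g = (u^2+k^2,\, uk(u+k))$. Now I would peel off the factors of $uk(u+k)$ one at a time using coprimality of $u$ and $k$: since $(u,k)=1$, any common prime $p$ of $u$ and $u^2+k^2$ would force $p\mid k^2$, contradiction; likewise $p\mid k$ and $p\mid u^2+k^2$ forces $p\mid u^2$. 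Therefore $(u^2+k^2,\,uk)=1$, and the identity collapses to $g = (u^2+k^2,\, u+k)$.

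Next I would reduce $u^2+k^2$ modulo $u+k$. Writing $u \equiv -k \pmod{u+k}$ gives $u^2+k^2 \equiv 2k^2 \pmod{u+k}$, so $g = (u+k,\, 2k^2)$. Again invoking $(u,k)=1$: a common prime $p$ of $u+k$ and $k$ would divide $u$, contradiction, so $(u+k,k^2)=1$ and therefore $g = (u+k, 2)$, which is $2$ when $u+k$ is even and $1$ when $u+k$ is odd. Finally, $u+k$ is even precisely when $u$ and $k$ have the same parity, i.e. when $2\mid (k-u)$; note that the same-parity case does occur only when both are odd (both even is excluded by coprimality), but the statement is phrased in terms of $k-u$ so no case split is needed. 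This matches the claimed piecewise value.

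I do not anticipate a serious obstacle here; the whole argument is three successive applications of ``reduce modulo the smaller term, then strip coprime factors,'' and the only thing requiring care is the repeated justification that the auxiliary factors ($uk$, then $k^2$) are coprime to the running gcd — each such step rests solely on the hypothesis $(u,k)=1$, so I would state that reduction once at the outset and reuse it. The mild subtlety worth a sentence is that $2$ is genuinely attained (not merely an upper bound): when $k$ and $u$ are both odd, $u^2+k^2 \equiv 2 \pmod 4$ and $u^3+k^3$ is even, so $2\mid g$, and the computation above shows $g\mid 2$, giving $g=2$ exactly.
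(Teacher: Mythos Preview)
Your argument is correct. Both your proof and the paper's use the Euclidean strategy of reducing one term modulo the other and then stripping factors coprime to $u^2+k^2$ via $(u,k)=1$, so the core idea is the same. The organizational difference is that the paper subtracts $k(u^2+k^2)$ from $u^3+k^3$ to obtain the remainder $u^2(u-k)$, strips $u^2$, and then finishes with a direct prime-power argument (rewriting $k^2+u^2=2u^2-2u(k-u)+(k-u)^2$ and working modulo a hypothetical common $p^j$); you instead exploit the factorization $u^3+k^3=(u+k)\bigl[(u^2+k^2)-uk\bigr]$ to get remainder $-uk(u+k)$, strip $uk$, and then perform a \emph{second} Euclidean reduction $u^2+k^2\equiv 2k^2\pmod{u+k}$ before stripping $k^2$. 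Your two-step Euclidean version is a bit more uniform and avoids the ad hoc prime-power case split; the paper's version stays closer to $k-u$ throughout, which matches the form of the stated condition $2\mid(k-u)$ without the parity translation you add at the end.
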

\begin{proof}
The first step of the Euclidean Algorithm is
\begin{equation}
\frac{k^3+u^3}{k^2+u^2} = k+\frac{u^2(k-u)}{k^2+u^2},
\end{equation}
so $(u^3+k^3,u^2+k^2)=(u^2(k-u),k^2+u^2)$.
Assume $p^j$ is one of the inquired
common prime power factors of the common divisor such that $p^j\mid (u^2(k-u))$ and
$p^j\mid (k^2+u^2)$, say $k^2+u^2=vp^j$ for some $j> 0$, $v>0$. 
The first requirement induces $p\mid u^2$ or $p|(k-u)$.
\begin{itemize}
\item
Suppose $p\mid u^2$, then $p\mid u$ by the uniqueness of prime factorizations,
say $u = \alpha p$.
Insertion
of this into $k^2+u^2=vp^j$ and evaluating both sides modulo $p$
leads to the requirement $k^2 \equiv 0 \pmod p$, therefore $p\mid k$.
This contradicts the requirement 
$p\mid u$
because $k$ and $u$
are coprime and must not have a common factor $p$. In conclusion
$p\nmid u^2$.
\item
Since $p\nmid u^2$, $p^j | (u^2(k-u))$
requires $p^j\mid (k-u)$. Rewrite
$k^2+u^2=2u^2-2u(k-u)+(k-u)^2 =vp^j$. Working modulo $p^j$ this becomes
$2u^2 \equiv 0 \pmod {p^j}$. Since $p$ does not divide $u^2$ as shown in the
previous bullet, this requirement reduces to $2\equiv 0 \pmod {p^j}$, leaving
$p^j=2^1$ as the only common prime divisor candidate.
\end{itemize}
It is furthermore obvious that for odd $k$ and odd $u$ both $u^2+k^2$ and
$u^3+k^3$ are even, so the common prime factor $2$ is indeed achieved.
\end{proof}

\begin{lma}\label{lma.kfac}
If $(u,k)=1$,
$k$ is coprime to $u^2+k^2$ and coprime to $u^2+k^3$.
\end{lma}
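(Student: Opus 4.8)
The plan is to reduce both coprimality claims to a single elementary fact: any prime dividing $k$ also divides every positive power $k^m$, and hence cannot divide a sum $u^2+k^m$ unless it already divides $u^2$. Working prime-by-prime rather than directly with the greatest common divisors is what makes this clean, because it lets me pass from $p\mid u^2$ to $p\mid u$ by unique factorization.

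First I would let $p$ be an arbitrary prime with $p\mid k$. Then $p\mid k^2$ and $p\mid k^3$, so I can strip the pure power of $k$ from each sum. If $p\mid u^2+k^2$, subtracting $k^2$ gives $p\mid (u^2+k^2)-k^2=u^2$; likewise, if $p\mid u^2+k^3$, subtracting $k^3$ gives $p\mid (u^2+k^3)-k^3=u^2$. In either case $p\mid u^2$, and by the uniqueness of the prime factorization this forces $p\mid u$. But then $p$ is a common prime factor of $u$ and $k$, contradicting the hypothesis $(u,k)=1$. Hence no prime divides both $k$ and $u^2+k^2$, and none divides both $k$ and $u^2+k^3$, so $(k,u^2+k^2)=1$ and $(k,u^2+k^3)=1$ as claimed.

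I expect no substantive obstacle here. The argument is a one-line application of $k\mid k^m$ and is uniform in the exponent $m\ge 1$, so the same reasoning disposes of both stated expressions simultaneously; indeed it would settle $(k,u^2+k^m)=1$ for every $m\ge 1$ with the identical proof. The only point requiring care is to phrase the contradiction at the level of a single prime $p$, since the implication $p\mid u^2\Rightarrow p\mid u$ is what converts the divisibility into a genuine common factor of $u$ and $k$; arguing directly about $\gcd(k,u^2+k^m)$ without isolating a prime would obscure exactly this step.
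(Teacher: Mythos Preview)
Your proof is correct and follows essentially the same idea as the paper's: strip off the pure power of $k$ from the sum to reduce the question to $(k,u^2)$ (or $(k,u^3)$), then invoke $(u,k)=1$. The paper phrases this reduction via the Euclidean algorithm rather than prime-by-prime, and its proof in fact treats $u^3+k^3$ in the second case---the ``$u^2+k^3$'' in the statement is evidently a typo---but your uniform argument covers any $u^m+k^n$ with $n\ge 1$ in one stroke.
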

\begin{proof}
In the first case the first step of the Euclidean Algorithm to compute
$(k^2+u^2,k)$ is
\begin{equation}
\frac{k^2+u^2}{k} = k+\frac{u^2}{k},
\end{equation}
in the second case
\begin{equation}
\frac{k^3+u^3}{k} = k^2+\frac{u^3}{k}.
\end{equation}
So the greatest common divisors are $(k^2+u^2,k)=(u^2,k)$
and $(k^3+u^3,k)=(u^3,k)$. Both expressions equal 1 because we assume
that $u$ and $k$ are coprime.
\end{proof}

\section{Sum of two Squares}
\begin{lma}
There are no solutions to $4k+1+p^2=x^3$ with $p$ a prime and $4k+1<p^2$.
\end{lma}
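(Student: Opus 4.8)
The plan is to split on the parity of the prime $p$: a congruence modulo $4$ eliminates every odd $p$ without any use of the inequality, and the size hypothesis $4k+1<p^2$ then disposes of the single remaining prime $p=2$. Throughout I read $k$ as a non-negative integer, so that $4k+1\ge 1$.

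First I would record that the cubes modulo $4$ are $0,1,3$ (since $0^3,1^3,2^3,3^3\equiv 0,1,0,3\pmod 4$); in particular no perfect cube is $\equiv 2\pmod 4$. Now let $p$ be an odd prime. Then $p^2\equiv 1\pmod 4$ (indeed $p^2\equiv 1\pmod 8$), while $4k+1\equiv 1\pmod 4$, so
\[
4k+1+p^2\equiv 2\pmod 4,
\]
and the left-hand side cannot be a cube. This rules out every odd prime $p$, for every $k$, and makes no use of the bound $4k+1<p^2$.

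It remains to treat $p=2$, and here the hypothesis carries the load: $4k+1<p^2=4$ together with $k\ge 0$ forces $k=0$, i.e.\ $4k+1=1$, so the equation becomes $x^3=1+4=5$. Since $5$ is not a perfect cube there is no solution; equivalently, $x^3=4k+5$ would have to lie in the interval $[5,8)$, which contains no cube. Combining the two cases exhausts all primes $p$ and proves the lemma.

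I do not expect a genuine obstacle here: the argument is elementary. The only points worth stating carefully are that the clean mod-$4$ obstruction degenerates exactly at $p=2$ --- which is precisely the reason the inequality $4k+1<p^2$ is imposed --- and that one is tacitly assuming $4k+1>0$; without it the degenerate instance $k=-1$, $p=2$, $x=1$ would satisfy $4k+1+p^2=1=x^3$ with $-3<4$, and the statement clearly means to exclude such cases. I would make both remarks explicit in the write-up.
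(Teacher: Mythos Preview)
Your proof is correct and follows essentially the same line as the paper's: both dispatch $p=2$ via the size constraint $4k+1<4$ forcing $k=0$, and for odd $p$ both observe that $4k+1+p^2\equiv 2\pmod 4$ (the paper phrases this as ``twice an odd number''), which cannot be a perfect cube. Your direct mod-$4$ argument is in fact slightly more streamlined than the paper's explicit case split into $p=4m+1$ and $p=4m+3$ together with its appeal to Lemma~\ref{lma.hypot}.
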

\begin{proof}
This is obvious for the even prime where $k=0$ is the only candidate.
The other primes are either of the form
$p=4m+1$ with $4k+1+p^2=2(1+2k+4m+8m^2)$ or $q=4m+3$ with $4k+1+q^2=2(5+2k+12m+8m^2)$.
In any case $4k+1+p^2$ is two times an odd number for odd primes $p$.  Because $(4k+1,p)=1$,
Lemma \ref{lma.hypot} applies and the 2 must appear on the right hand side either not
at all or risen to the first power. Both contradicts the request for a perfect cube $x^3$
on the right hand side.
\end{proof}

\begin{lma}
There are no solutions to
\begin{equation}
u^2+p^2=x^3
\end{equation}
where $p$ is a prime, $(u,p)=1$ and $1\le u\le p$.
\end{lma}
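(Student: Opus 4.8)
The plan is to pass immediately to the structure theorem of Lemma \ref{lma.twosquares}. First I would note that the hypotheses already guarantee primitivity: since $(u,p)=1$ we have $(u,p,x)\mid(u,p)=1$, so $\{u,p,x\}$ is a coprime triple and the parametrization applies without modification. Thus there exist coprime integers $s,t$ with $st\neq0$ such that
\begin{equation}
\{u,p\}=\{\,s(s^2-3t^2),\;t(3s^2-t^2)\,\},\qquad x=s^2+t^2 .
\end{equation}

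Next I would use the primality of $p$. Whichever of the two factored expressions equals $p$ must have one of its integer factors equal to $\pm1$, which splits the problem into a few branches: $|t|=1$ (giving $p=|3s^2-1|$ and $u=s(s^2-3)$), or $|3s^2-t^2|=1$ (giving $|t|=p$), together with the two symmetric branches in which $p$ takes the role $s(s^2-3t^2)$. In each branch $u$ and $p$ are explicit polynomials in a single remaining parameter, and the natural hope is to eliminate every branch by combining the size bound $1\le u\le p$ with the constraints on the prime factors of $x$ from Lemma \ref{lma.hypot}. I would like to reuse the parity mechanism of the preceding lemma---the observation that the relevant sum is twice an odd number---but that tool is unavailable here: for a sum of two coprime squares to be a perfect cube, Lemma \ref{lma.hypot} forces $c_1=0$, i.e. $u^2+p^2$ must be \emph{odd}, so $u$ and $p$ have opposite parity and there is no factor of $2$ to exploit.

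The hard part is that this is not merely a loss of technique: the claim is simply \emph{false}. The branch $|t|=1$, $s=2$ gives $p=3\cdot4-1=11$ (prime), $u=2(4-3)=2$ and $x=5$, that is $2^2+11^2=125=5^3$, with $(2,11)=1$ and $1\le2\le11$. Hence no proof of the statement as worded can exist, and the obstacle is the statement itself rather than any missing estimate. What I would actually propose is to strengthen the hypothesis by adjoining the companion equation $u^3+p^3=y^2$, so that $(u,p)$ is required to be a genuine coprime Bhaskara pair. Feeding the parametrization above into the cubic equation, the surviving candidate $(u,p)=(2,11)$ gives $u^3+p^3=1339=13\cdot103$, which is not a square; the corrected lemma would then amount to showing that in every branch of the case analysis the value $u^3+p^3$ fails to be a perfect square, and I expect the main difficulty of that program to be controlling the squareness condition along the resulting one-parameter families rather than any single arithmetic step.
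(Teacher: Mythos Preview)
Your counterexample is correct: $2^{2}+11^{2}=125=5^{3}$ with $11$ prime, $(2,11)=1$, and $1\le 2\le 11$, so the lemma as stated is false.  Your diagnosis via Lemma~\ref{lma.twosquares} is also sound: the branch $t=1$, $s=2$ of the parametrization produces exactly this triple, and nothing in the hypotheses excludes it.

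It is worth seeing precisely where the paper's argument breaks.  The paper follows the same parametrization you chose, but in the case $t=1$ it writes the conjugate factor as $p^{2}/t=p^{2}=3s^{2}-1$ and then argues that $p^{2}\equiv 1\pmod 3$ for every prime $p>3$, contradicting $p^{2}\equiv -1\pmod 3$.  This is a slip: by Lemma~\ref{lma.twosquares} the quantity factored as $t(3s^{2}-t^{2})$ is $p$, not $p^{2}$, so the conjugate should read $p/t=p=3s^{2}-1$.  The corrected congruence $p\equiv 2\pmod 3$ is of course satisfiable, and $s=2$ gives $p=11$.  The same $p^{2}$-for-$p$ substitution propagates through the remaining cases $t=\pm p$, so the entire case analysis is carried out for the wrong equation.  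Your proposed repair---adjoining the companion condition $u^{3}+p^{3}=y^{2}$ and noting that $2^{3}+11^{3}=13\cdot 103$ is not a square---is the natural salvage and is consistent with the paper's overall aim of studying coprime Bhaskara pairs.
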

\begin{proof}
The case of the even prime is obvious because $1+2^2$ is not a cube,
and the case of the only prime with $3\mid p$ is also obvious
because $1+3^2$ and $2+3^2$ are not cubes. The proof is based on
the failure to create any of the parameterizations required by 
Lemma \ref{lma.twosquares} considering all $t|p$ one by one:
\begin{itemize}
\item
$t=1$  leads to the conjugate divisors $p^2/t=p^2=3s^2-1$.
The other primes fall into the categories $p=3m+1$ where $p^2\equiv 1 \pmod 3$
and $q=3m+2$ where $q^2= 1 \pmod 3$. This contradicts $p^2 \equiv -1 \pmod 3$
of the conjugate required above, so there are no solutions induced by $t=1$.
\item
$t=-1$ leads to a conjugate $p^2/t=-p^2$ which is negative and cannot be
equal to the (essentially) positive $3s^2-1$.
\item
$t=p$ leads to the conjugate divisor $p^2/p=p=3s^2-p^2$,
$p+p^2=p(p+1)=3s^2$. For primes of the form $p=3m+1$ we have $p(p+1)\equiv 2 \pmod 3$ 
and for primes of the form $q=3m+2$ we have $q(q+1) \equiv 0 \pmod 3$.
So only the primes $\equiv 2 \pmod 3$ generate $3s^2$ that are multiples of 3.
If $q=3m+2$ then $q(q+1)=3(m+1)(3m+2)$, so we require $s^2=(m+1)(3m+2)$.
Because $m+1$ and $3m+2$ are coprime, their product can only be
a perfect square $s^2$ if $m+1$ and $3m+2$ are individually perfect squares,
say $m+1=\alpha^2$, $3m+2=\beta^2$, $(\alpha,\beta)=1$. 
$\beta^2-3\alpha^2=-1$.
This negative Pell equation with $D=3$ is not solvable \cite{LagariasTAMS260,FouvryAM172};
the parameterization does not generate solutions.
\item
$t=-p$ leads to the conjugate $p^2/p = -p = 3s^2-p^2$.
$p(p-1)=3s^2$ with $p=3m+1$ implies $p(p-1)\equiv 0 \pmod 3$.
$q(q-1)=3s^2$ with $q=3m+2$ implies $q(q-1)\equiv 2 \pmod 3$.
So only primes $p=3m+1$ remain candidates to represent $3s^2$,
and then $p(p-1)=3m(3m+1)=3s^2$
requires 
$s^2=m(3m+1)=mp$.
Because $m$ and $3m+1$ are coprime, this requires that $p\mid s$.
Because $s$ is a divisor of $u$, this violates the requirement that
$(u,p)=1$ and does not foster solutions.
\end{itemize}

\end{proof}

\end{document}